\DeclarePairedDelimiterX{\norm}[1]{\lVert}{\rVert}{#1}
\newtheorem{thm}{Theorem}[section]
\newtheorem{cor}[thm]{Corollary}
\theoremstyle{definition}
\theoremstyle{remark}
\newtheorem{remark}[thm]{Remark}
\DeclareMathOperator{\Vol}{Vol}
\DeclareMathOperator{\Area}{Area}
\DeclareMathOperator{\Length}{Length}
\DeclareMathOperator{\Ric}{Ric}
\DeclareMathOperator{\sys}{sys}
\DeclareMathOperator{\Hess}{Hess}
\begin{document}


\begin{abstract}
We prove some sharp systolic inequalities for compact $3$-manifolds with boundary. They relate the (relative) homological systoles of the manifold to its scalar curvature and mean curvature of the boundary. In the equality case, the universal cover of the manifold is isometric to a cylinder over a disc of  nonnegative constant curvature. 
\end{abstract}

\title{Sharp Systolic Inequalities for $3$-Manifolds with Boundary}

\author{Eduardo Longa}

\address{Departamento de Matem\'{a}tica, Instituto de Matem\'{a}tica e Estat\'{i}stica, Universidade de S\~{a}o Paulo, R. do Mat\~{a}o 1010,
S\~{a}o Paulo, SP 05508-900, Brazil}
\email{edulonga@ime.usp.br}

\subjclass[2010]{53C20, 53C24}

\keywords{Systole, scalar curvature, rigidity}

\maketitle

\let\thefootnote\relax\footnote{The author was partially supported by grant 2017/22704-0, São Paulo Research Foundation (FAPESP).}

\section{Introduction}

Systolic Geometry dates back to the late 1940s, with the work of Charles Loewner and his doctoral student Pao Ming Pu. This branch of differential geometry received more attention after the seminal work of Gromov \cite{gromov1983}, where he proved his famous systolic inequality and introduced many concepts, notably the filling radius and the filling volume of a manifold. This line of research would be popularised, subsequently, by Marcel Berger, in a series of books and articles (see \cite{berger1993}, \cite{bergerBook}, \cite{berger2008}, for example).

The main objects of Systolic Geometry are, as one would expect, the systoles. Let us recall the definition. For a Riemannian manifold $(M^n,g)$ and an integer $1 \leq k < n$, its \textit{homological $k$-systole} is given by
\begin{align*}
\sys_k(M) = \inf \{ \Vol(\Sigma) : \Sigma^k \subset M \text{ closed and embedded, } [\Sigma] \neq 0 \in H_k(M;\mathbb{Z}) \}. 
\end{align*} 

In a recent paper, Stern \cite{stern2019} gave a proof for the following systolic inequality, which is a weaker variant of the theorem proved by Bray-Brendle-Neves in \cite{bray2010}:

\begin{thm} \label{stern systolic}
On a closed, connected and oriented Riemannian $3$-manifold $(M^3, g)$ with positive scalar curvature $R_M$ and $H_2(M;\mathbb{Z}) \neq 0$, we have
\begin{align*}
\sys_2(M) \inf_M R_M \leq 8 \pi.
\end{align*} 
Moreover, if equality holds, then the universal cover of $M$ is isometric to the standard cylinder $\mathbb{S}^2 \times \mathbb{R}$ up to scaling, where $\mathbb{S}^2$ is the unit round sphere.
\end{thm}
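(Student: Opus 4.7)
The plan is to follow Stern's strategy from \cite{stern2019}, reducing the inequality to an integral identity on level sets of a harmonic map to $\mathbb{S}^1$. Since $M$ is closed and oriented with $H_2(M;\mathbb{Z})\ne 0$, Poincar\'e duality produces a nonzero primitive class $\alpha\in H^1(M;\mathbb{Z})\cong[M,\mathbb{S}^1]$, which I would represent by the energy-minimizing (hence harmonic) map $u\colon M\to\mathbb{S}^1$ in its homotopy class. By Sard's theorem, for a.e.\ $\theta\in\mathbb{S}^1$ the fiber $\Sigma_\theta:=u^{-1}(\theta)$ is a smooth closed embedded surface with $[\Sigma_\theta]=\mathrm{PD}(\alpha)\ne 0$, so $\Area(\Sigma_\theta)\ge\sys_2(M)$; since $\alpha$ is primitive, one can arrange that $\Sigma_\theta$ is connected, giving $\chi(\Sigma_\theta)\le 2$.

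The analytical core is a Bochner--Gauss--Bonnet identity for the level sets. Writing $\varphi=|\nabla u|$ and $\nu=\nabla u/\varphi$ on the regular set of $u$, I would combine the Gauss equation for $\Sigma_\theta\subset M$, the harmonic identity $\Delta u=0$ (which forces the mean curvature $H_{\Sigma_\theta}=\Hess(u)(\nu,\nu)/\varphi$), and the Bochner formula $\tfrac12\Delta\varphi^2=|\Hess u|^2+\Ric(\nabla u,\nabla u)$ to obtain the pointwise identity
\begin{equation*}
K_{\Sigma_\theta}=\frac{R_M}{2}+\frac{|\Hess u|^2}{2\varphi^2}-\frac{\Delta\varphi}{\varphi}.
\end{equation*}
Gauss--Bonnet on each $\Sigma_\theta$, together with the coarea relation $dA\,d\theta=\varphi\,dV$ and $\int_M\Delta\varphi\,dV=0$ on the closed manifold $M$, would then yield the global identity
\begin{equation*}
2\pi\int_{\mathbb{S}^1}\chi(\Sigma_\theta)\,d\theta=\int_M\left(\frac{R_M\,\varphi}{2}+\frac{|\Hess u|^2}{2\varphi}\right)dV.
\end{equation*}

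From here I would discard the nonnegative Hessian term, estimate $R_M\ge\inf_M R_M>0$, and invoke coarea together with $\Area(\Sigma_\theta)\ge\sys_2(M)$ to deduce
\begin{equation*}
4\pi\cdot 2\pi\;\ge\;2\pi\int_{\mathbb{S}^1}\chi(\Sigma_\theta)\,d\theta\;\ge\;\frac{\inf_M R_M}{2}\int_M\varphi\,dV\;\ge\;\pi\,\inf_M R_M\cdot\sys_2(M),
\end{equation*}
which simplifies to $\sys_2(M)\inf_M R_M\le 8\pi$. For rigidity, equality throughout forces $\Hess u\equiv 0$, so $\nabla u$ is a nontrivial parallel vector field on $M$; this splits the universal cover isometrically as $\tilde M\cong N^2\times\mathbb{R}$, and equality in the remaining bounds forces $N$ to be a closed surface of constant Gauss curvature $\inf_M R_M/2$ and total area $\sys_2(M)$ with $\chi(N)=2$, hence a round $\mathbb{S}^2$---the unit sphere after rescaling.

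The main obstacle is the derivation of the pointwise identity for $K_{\Sigma_\theta}$: although each ingredient is classical, the algebraic bookkeeping needed to cancel the Ricci and second-fundamental-form contributions into the clean expression above is the heart of Stern's method, and one must regularize near the critical set $\{\varphi=0\}$ to justify the integrations. A secondary but nontrivial technical point is establishing $\chi(\Sigma_\theta)\le 2$ by arranging that generic level sets of the chosen harmonic representative of a primitive class are connected.
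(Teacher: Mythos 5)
This statement is Stern's theorem, which the paper quotes from \cite{stern2019} without proof; the closest thing to an in-paper proof is the argument for Theorem \ref{main1}, which runs the same strategy in the boundary setting. Measured against that, your reconstruction of the analytic core is essentially correct: the pointwise identity $K_{\Sigma_\theta}=\tfrac{1}{2}R_M+\tfrac{1}{2}\varphi^{-2}\norm{\Hess u}^2-\varphi^{-1}\Delta\varphi$ on the regular set is the right combination of Bochner, harmonicity and the traced Gauss equation, and integrating it via coarea and Gauss--Bonnet is exactly Stern's route. Two caveats: the integrated statement should be an inequality $2\pi\int\chi(\Sigma_\theta)\,\mathrm{d}\theta\geq\dots$ rather than an identity, since $\varphi$ is only Lipschitz across the critical set and the regularisation $\sqrt{\varphi^2+\epsilon^2}$ contributes a term of definite sign (you flag this, and the sign goes your way, so it is a presentational rather than substantive issue); and in the rigidity case the vanishing of $\Hess u$ does give a parallel $1$-form and the splitting you describe, matching the gradient-flow argument the paper uses for Theorem \ref{main1}.

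The genuine gap is your treatment of disconnected fibers. You claim that because the class $\alpha$ can be taken primitive, ``one can arrange that $\Sigma_\theta$ is connected.'' You cannot: the harmonic representative of a homotopy class is essentially unique, its level sets are whatever they are, and primitivity of $\alpha$ does not force connectedness of the regular fibers; any attempt to modify $u$ to connect them destroys harmonicity and with it the whole Bochner computation. The standard repair --- used by Stern and reproduced verbatim in this paper's proof of Theorem \ref{main1} --- is to observe that \emph{every connected component} $S$ of a regular fiber satisfies $\int_S\ast h=\int_S\lvert h\rvert>0$ for $h=u^*(\mathrm{d}\theta)$, hence is itself nontrivial in $H_2(M;\mathbb{Z})$ and has $\Area(S)\geq\sys_2(M)$. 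Writing $N(\theta)$ for the number of components, one then has $\chi(\Sigma_\theta)\leq 2N(\theta)$ and $\Area(\Sigma_\theta)\geq N(\theta)\sys_2(M)$, and the factors of $\int_{\mathbb{S}^1}N(\theta)\,\mathrm{d}\theta$ cancel from both sides of the chain of inequalities, giving the same constant $8\pi$ with no connectedness hypothesis. You should replace the primitivity/connectedness step by this component-counting argument; with that substitution the proposal is a faithful proof of the quoted theorem.
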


From now on, we consider Riemannian $3$-manifolds with nonempty boundary. Let $(M^3,g)$ be such a manifold. Define its  \textit{relative homological $2$-systole} by 
\begin{align*}
\sys_2(M,\partial M) = \inf \{ \Area(\Sigma) : \Sigma \in \mathcal{S} \text{ and } [\Sigma] \neq 0 \in H_2(M, \partial M;\mathbb{Z}) \},
\end{align*}
where $\mathcal{S}$ denotes the set of all compact and embedded surfaces $\Sigma \subset M$ with boundary such that $\partial \Sigma \subset \partial M$. Notice that any surface in $\mathcal{S}$ that represents a nonzero element of $H_2(M, \partial M; \mathbb{Z})$ must be orientable.

\begin{remark}
Are there geometric obstructions for $H_2(M, \partial M; \mathbb{Z})$ to be nonzero for a given compact, connected and orientable Riemannian $3$-manifold $(M, g)$ with nonempty boundary? For instance, Fraser and Li  
\cite[Lemma 2.1]{fraser_li2014} showed that $H_2(M, \partial  M;\mathbb{Z})$ vanishes if the Ricci curvature of $M$ is nonnegative and $\partial M$ is strictly convex. They actually proved a much stronger result: under these geometric assumptions, $M$ is diffeomorphic to the $3$-ball $\mathbb{B}^3$ (see Theorem 2.11 in \cite{fraser_li2014}).
\end{remark}

In the present paper, we obtained sharp inequalities relating the (relative) homological systoles of a Riemannian $3$-manifold $(M^3, g)$ to its scalar curvature and mean curvature of the boundary. In all the results that follow, we assume that the scalar curvature of $M$ is positive (or nonnegative) and that $\partial M$ is mean-convex (or weakly mean-convex). Having this in mind, it is important to further elucidate the setting in which our results apply.

Recently, Carlotto and Li proved a complete topological classification of those compact, connected and orientable $3$-manifolds with boundary which support Riemannian metrics of positive scalar curvature and mean-convex boundary (see Theorem 1.1 and Corollary 2.6 in \cite{carlotto2018}). Namely, if $M^3$ is such a manifold, then there exist integers $A, B, C, D \geq 0$ such that $M$ is diffeomorphic to a connected sum of the form
\begin{align} \label{topology}
P_{\gamma_1} \# \cdots \# P_{\gamma_A} \# \mathbb{S}^3/ {\Gamma_1} \# \cdots \# \mathbb{S}^3 / {\Gamma_B} \# \left(  \#_{i=1}^C \mathbb{S}^2 \times \mathbb{S}^1 \right) \setminus \left( \sqcup_{i=1}^D B_i^3 \right),
\end{align}
where $P_{\gamma_i}$, $i \leq A$, are genus $\gamma_i$ handlebodies; $\Gamma_i$, $i \leq B$, are finite subgroups of $SO(4)$ acting freely on $\mathbb{S}^3$; $B_i^3$, $i \leq D$, are disjoint $3$-balls in the interior. Conversely, if $M$ is of this form, then it supports Riemannian metrics of positive scalar curvature and mean-convex boundary.

We now state our results. Firstly, inspired by Stern's ideas, we prove the following: 

\begin{thm} \label{main1}
Let $(M^3, g)$ be a compact, connected and oriented Riemannian $3$-manifold with nonempty boundary. Assume that $H_2(M, \partial M;\mathbb{Z}) \neq 0$. If $M$ has positive scalar curvature ($R_M > 0$) and weakly mean-convex boundary ($H^{\partial M} \geq 0$), then
\begin{align*}
\sys_2(M, \partial M) \inf_M R_M \leq 4 \pi.
\end{align*}
Moreover, if equality holds, then the universal cover of $M$ is isometric to the cylinder $\mathbb{S}^2_{+} \times \mathbb{R}$ up to scaling, where 
$\mathbb{S}^2_{+}$ is a closed hemisphere of the unit round sphere.
\end{thm}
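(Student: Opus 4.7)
The plan is to follow the harmonic-map approach of Stern \cite{stern2019}, adapted to the boundary setting via a Neumann condition. By Poincar\'e--Lefschetz duality, $H_2(M,\partial M;\mathbb{Z})\neq 0$ yields a nontrivial class in $H^1(M;\mathbb{Z}) \cong [M,S^1]$; in this homotopy class I pick a harmonic representative $u : M \to S^1$ satisfying the Neumann boundary condition $\partial u/\partial\nu = 0$ on $\partial M$, whose existence follows by minimizing Dirichlet energy within the homotopy class (equivalently, by Hodge theory for $1$-forms with absolute boundary conditions). By Sard's theorem, for almost every $\theta \in S^1$ the level set $\Sigma_\theta = u^{-1}(\theta)$ is a smooth, compact, orientable embedded surface with $\partial\Sigma_\theta \subset \partial M$; the Neumann condition forces $\nabla u$ to be tangent to $\partial M$, so $\Sigma_\theta \perp \partial M$ along $\partial\Sigma_\theta$. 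Its class in $H_2(M,\partial M;\mathbb{Z})$ is the Poincar\'e--Lefschetz dual of $u^{*}[d\theta] \neq 0$ and hence nontrivial, so $\Area(\Sigma_\theta) \geq \sys_2(M,\partial M)$ and $\chi(\Sigma_\theta) \leq 1$.

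Setting $\phi = |du|$ and combining the Bochner formula for the harmonic $u$ with the Gauss equation for $\Sigma_\theta \hookrightarrow M$, I derive the Stern pointwise identity
\[
\phi\, K_{\Sigma_\theta} \;=\; \phi\,\frac{R_M}{2} \;-\; \Delta \phi \;+\; \frac{|\Hess u|^2}{2\phi}
\]
at points where $\phi > 0$. Integrating this over $M$, applying coarea twice (once on $M$ against the weight $\phi$, once on $\partial M$ against $|du|_{\partial M} = \phi$, the latter equality again from Neumann) and Gauss--Bonnet with boundary on each $\Sigma_\theta$, yields
\[
2\pi \int_{S^1}\! \chi(\Sigma_\theta)\,d\theta \;=\; \int_M \phi\,\frac{R_M}{2}\,dV \;+\; \int_M \frac{|\Hess u|^2}{2\phi}\,dV \;+\; \int_{\partial M}\!\big(\kappa_g\,\phi - \partial_\nu\phi\big)\,dA.
\]
The crucial step is showing that the boundary integrand is nonnegative. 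Differentiating $\langle\nabla u,\nu\rangle = 0$ tangentially gives $\Hess u(X,\nu) = -\mathrm{II}^{\partial M}(X,\nabla u)$ for $X \in T\partial M$, so $\partial_\nu\phi = -\phi\,\mathrm{II}^{\partial M}(e_3,e_3)$ with $e_3 = \nabla u/\phi$; meanwhile $\Sigma_\theta \perp \partial M$ identifies the outward conormal of $\partial\Sigma_\theta$ in $\Sigma_\theta$ with $\nu$, yielding $\kappa_g = \mathrm{II}^{\partial M}(T,T)$ where $T$ is the unit tangent to $\partial\Sigma_\theta$. Therefore $\kappa_g\phi - \partial_\nu\phi = \phi\,H^{\partial M} \geq 0$, and all three terms on the right are nonnegative.

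Using $\int_M \phi = \int_{S^1}\Area(\Sigma_\theta)\,d\theta \geq 2\pi\,\sys_2(M,\partial M)$ together with $\chi(\Sigma_\theta) \leq 1$ for a.e.\ $\theta$, the assembled identity gives $\pi\,\sys_2(M,\partial M)\,\inf_M R_M \leq 4\pi^2$, the desired sharp bound. Equality forces $\Hess u \equiv 0$ on $\{\phi > 0\}$, $H^{\partial M} \equiv 0$ along $\partial M$, $R_M \equiv \inf_M R_M$, and $\chi(\Sigma_\theta) = 1$ for a.e.\ $\theta$; the vanishing Hessian makes $\phi$ locally constant and $\nabla u$ parallel, so by a de Rham-type splitting argument the universal cover of $M$ splits isometrically as $\widetilde\Sigma \times \mathbb{R}$ with totally geodesic leaves. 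Each leaf is a simply connected topological disk of constant positive Gauss curvature $R_M/2$ with geodesic boundary (combining $H^{\partial M} = 0$ with $\mathrm{II}^{\partial M}(e_3,e_3) = 0$ forced by the product structure), hence a round hemisphere, yielding the claimed rigidity. The main obstacle I anticipate is handling the critical set $\{\phi = 0\}$, where the pointwise identity degenerates and the geometric quantities $K_{\Sigma_\theta}$ and $\mathrm{II}_{\Sigma_\theta}$ blow up; following Stern, I would work throughout with the smooth regularization $\phi_\varepsilon := \sqrt{\phi^2 + \varepsilon^2}$, run the whole calculation for $\phi_\varepsilon$, and pass to the limit $\varepsilon \downarrow 0$ using monotone/dominated convergence and the integrability of $|\Hess u|^2/\phi$ produced by the identity itself.
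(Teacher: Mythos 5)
Your overall strategy is the same as the paper's: a harmonic map $u\colon M\to\mathbb{S}^1$ with homogeneous Neumann condition dual to the nontrivial relative class, the Bochner/Gauss--Bonnet integral inequality for its level sets, and the identification of the boundary term with $\int \phi\, H^{\partial M}$. (The one structural difference is that you re-derive the integral inequality, including the boundary computation $\kappa_g\phi-\partial_\nu\phi=\phi H^{\partial M}$, whereas the paper simply cites it as Theorem 1.1 of Bray--Stern; your derivation is correct and is essentially the proof of that cited result.)

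However, there is a genuine gap at the step ``$[\Sigma_\theta]\neq 0$, so $\Area(\Sigma_\theta)\geq \sys_2(M,\partial M)$ and $\chi(\Sigma_\theta)\leq 1$.'' A regular level set of $u$ need not be connected, and for a disconnected surface with $N(\theta)$ components one only has $\chi(\Sigma_\theta)\leq N(\theta)$, which can exceed $1$; nontriviality of the total class $[\Sigma_\theta]$ gives you only $\Area(\Sigma_\theta)\geq \sys_2(M,\partial M)$, not $\geq N(\theta)\,\sys_2(M,\partial M)$. With these weaker bounds the chain of inequalities yields $2\pi\int N(\theta)\,d\theta$ on the left against a right-hand side controlled by a single copy of $\sys_2(M,\partial M)$, and the conclusion does not follow. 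The missing ingredient, which the paper supplies, is that \emph{every connected component} $S$ of a regular level set represents a nonzero class in $H_2(M,\partial M;\mathbb{Z})$: writing $h=u^*(\mathrm{d}\theta)$, one has $\int_S \ast h=\int_S \lvert h\rvert>0$, so $S$ pairs nontrivially with the closed $1$-form $h$ and cannot be null-homologous rel $\partial M$. This upgrades the area bound to $\Area(\Sigma_\theta)\geq N(\theta)\,\sys_2(M,\partial M)$, matching the bound $\chi(\Sigma_\theta)\leq N(\theta)$, and the factors $\int N(\theta)\,d\theta$ cancel to give $\sys_2(M,\partial M)\inf_M R_M\leq 4\pi$. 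The same point recurs in your rigidity discussion: equality forces $\chi(\Sigma_\theta)=N(\theta)$ (each component a disc), not $\chi(\Sigma_\theta)=1$. The rest of your equality analysis (vanishing Hessian, constant $\lvert\mathrm{d}u\rvert$, totally geodesic leaves of constant curvature $R_M/2$ with geodesic boundary, splitting of the universal cover) agrees with the paper's.
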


A natural question in whether the number $\sys_2(M, \partial M)$ is attained by the area of a compact and properly embedded surface $\Sigma \subset M$. To answer this question, we first introduce some terminology. Recall that an embedded surface $\Sigma \subset M$ is said to be \textit{separating} if $M \setminus \Sigma$ has at least two connected components. We say that a compact and orientable $3$-manifold $M$ is \textit{weakly irreducible} if every smoothly embedded $2$-sphere in the interior of $M$ is separating. 

We then answer affirmatively the above question assuming some hypothesis on the geometry and topology of  $M$:

\begin{thm} \label{main2}
Let $(M^3,g)$ be a compact, connected and oriented Riemannian $3$-manifold with nonempty boundary. Assume that $M$ is weakly irreducible and $H_2(M, \partial M;\mathbb{Z}) \neq 0$. If $M$ has positive scalar curvature and weakly mean-convex boundary, then there exists a properly embedded free boundary stable minimal disc $D$ in $M$ such that $[D] \neq 0$ in $H_2(M, \partial M;\mathbb{Z})$ and $\Area(D) = \sys_2(M, \partial M)$.
\end{thm}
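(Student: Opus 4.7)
The strategy is to realize $\sys_2(M,\partial M)$ as the area of an area-minimizing free boundary minimal surface representing a nontrivial class in $H_2(M,\partial M;\mathbb{Z})$, and then to pin down the topology of this minimizer using the second variation combined with the curvature hypotheses.

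To produce the minimizer, I first note that $H_2(M,\partial M;\mathbb{Z})$ is finitely generated, so its free part embeds as a discrete lattice in $H_2(M,\partial M;\mathbb{R})$. Because any norm on real homology is controlled by the area of a representative, the set of nonzero classes $\alpha$ satisfying $\inf\{\Area(\Sigma):[\Sigma]=\alpha\}\leq \sys_2(M,\partial M)+1$ is finite. Hence a minimizing sequence $\Sigma_i$ with $[\Sigma_i]\neq 0$ and $\Area(\Sigma_i)\to \sys_2(M,\partial M)$ may, after passing to a subsequence, be assumed to represent a fixed nonzero class $\alpha_0$. Federer--Fleming compactness for integral currents with boundary in $\partial M$ yields a limit current $T$ of mass $\sys_2(M,\partial M)$ representing $\alpha_0$, and interior regularity together with boundary regularity for free boundary area minimizers (Gr\"uter, Hardt--Simon, and related works) upgrades $T$ to a smooth properly embedded surface $\Sigma$ meeting $\partial M$ orthogonally. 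The strong maximum principle, applied using the mean-convexity of $\partial M$, rules out any component of $\Sigma$ lying tangentially along $\partial M$. A direct comparison argument then forces $\Sigma$ to be connected: if $\Sigma=\Sigma_1\sqcup\Sigma_2$ split into two homologically nontrivial pieces, the total area would be at least $2\sys_2(M,\partial M)$, while a homologically trivial component can be discarded to strictly decrease the area without changing the class.

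Next, $\Sigma$ is area-minimizing and hence stable. Substituting $\phi\equiv 1$ into the second variation inequality, replacing $\Ric_M(N,N)$ via the Gauss equation $R_M=2\Ric_M(N,N)+2K_\Sigma+|A|^2$, and applying the Gauss--Bonnet theorem to $(\Sigma,\partial\Sigma)$ together with the pointwise identity $\kappa_g^\Sigma + II^{\partial M}(N,N) = H^{\partial M}$ along $\partial\Sigma$ (valid because $\Sigma\perp\partial M$ and $\Sigma$ is minimal, so $II^{\partial M}(T,T)=\kappa_g^\Sigma$ for $T$ the unit tangent to $\partial\Sigma$) yields
\begin{align*}
2\pi\chi(\Sigma)\,\geq\,\tfrac{1}{2}\int_\Sigma R_M\,dA\,+\,\tfrac{1}{2}\int_\Sigma |A|^2\,dA\,+\,\int_{\partial\Sigma} H^{\partial M}\,d\ell.
\end{align*}
Since $R_M>0$ and $H^{\partial M}\geq 0$, this forces $\chi(\Sigma)>0$, and because $\Sigma$ is compact, connected, and orientable, it is either a $2$-sphere (if $\partial\Sigma=\emptyset$) or a disc (if $\partial\Sigma\neq\emptyset$). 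The sphere alternative is excluded by weak irreducibility: a closed embedded $2$-sphere in the interior of $M$ must separate $M$, hence bounds a relative $3$-chain modulo $\partial M$ and represents zero in $H_2(M,\partial M;\mathbb{Z})$, contradicting $[\Sigma]=\alpha_0\neq 0$. Therefore $\Sigma$ is a properly embedded free boundary stable minimal disc $D$ of area $\sys_2(M,\partial M)$, as desired.

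The main obstacle lies in the geometric-measure-theoretic existence and boundary regularity step, which hinges on the mean-convexity of $\partial M$ to confine the minimizer inside $M$ and to obtain a clean boundary contribution in the $\chi$ estimate; once the smooth free boundary minimizer is in hand, the topological identification is a routine combination of the stability inequality, the Gauss equation, and Gauss--Bonnet.
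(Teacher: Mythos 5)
Your proposal is correct in outline and reaches the stated conclusion, but it takes a genuinely different route from the paper's. The paper never fixes a homology class: it mass-minimizes in the class of each $\Sigma_n$ of the minimizing sequence separately (citing Guang--Wang--Zhou and Mazet--Rosenberg for existence and free boundary regularity), uses weak irreducibility at that stage --- closed components of the minimizer are spheres by the Schoen--Yau test-function argument, and separating spheres are null-homologous, so some component $D_n$ with nonempty boundary must carry the homology --- and then passes to a limit of the $D_n$ via the smooth compactness theorem of Guang--Li--Zhou for stable free boundary minimal surfaces of uniformly bounded area, finally quoting Chen--Fraser--Pang for the disc conclusion. You instead (i) reduce to a single class $\alpha_0$ by the lattice/finiteness argument for the homological mass norm, which lets you run the direct method once and dispenses entirely with the compactness theorem for sequences of minimal surfaces; (ii) obtain connectedness by the area-comparison and discarding argument (this implicitly uses $\sys_2(M,\partial M)>0$ and multiplicity one of the minimizing current --- both standard, but worth a sentence); (iii) rederive the Chen--Fraser--Pang input by hand: your inequality $2\pi\chi(\Sigma)\geq\tfrac12\int_\Sigma (R_M+|A|^2)+\int_{\partial\Sigma}H^{\partial M}$ from stability with $\phi\equiv 1$, the Gauss equation and Gauss--Bonnet is the correct free boundary analogue of Schoen--Yau (note that $II^{\partial M}(T,T)=\kappa_g$ needs only orthogonality, not minimality); and (iv) invoke weak irreducibility only at the very end to exclude the sphere alternative. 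What your route buys is self-containedness in the topological identification and the avoidance of the minimal-surface compactness theorem; what it costs is that the entire weight falls on existence and boundary regularity of the relative mass minimizer when $\partial M$ is only weakly mean-convex, where your appeal to the strong maximum principle is the right idea but is precisely the delicate point that the paper outsources to the cited geometric-measure-theoretic references; you should lean on those same references there rather than treat it as routine.
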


Using the same techniques as in Theorem \ref{main1}, we also prove:

\begin{thm} \label{main3}
Let $(M^3,g)$ be a compact, connected and oriented Riemannian $3$-manifold with nonempty boundary. Assume that 
$H_2(M, \partial M; \mathbb{Z}) \neq 0$ and that the connecting homomorphism $H_2(M, \partial M; \mathbb{Z}) \to H_1(\partial M; \mathbb{Z})$ is injective. If $M$ has positive scalar curvature and weakly mean-convex boundary, then
\begin{align*}
\frac{1}{2} \sys_2(M, \partial M) \inf_M R_M + \sys_1(\partial M) \inf_{\partial M} H^{\partial M} \leq 2 \pi.
\end{align*}
Moreover, if equality holds, then the universal cover of $M$ is isometric to the cylinder $\mathbb{B}^2_{r} \times \mathbb{R}$ up to scaling, where 
$\mathbb{B}^2_{r}$ is a geodesic ball of radius $r = \cos^{-1}\left( 1 - \frac{\sys_2(M, \partial M)}{2 \pi} \right)$ of the unit round sphere.
\end{thm}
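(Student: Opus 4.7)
The plan is to run the harmonic-function argument of Stern exactly as in the proof of Theorem \ref{main1}, but to retain (rather than discard under the hypothesis $H^{\partial M}\geq 0$) the boundary mean-curvature contribution and then bound it below using $\sys_1(\partial M)$. By Poincaré--Lefschetz duality I pick a primitive nonzero class $\alpha\in H_2(M,\partial M;\mathbb{Z})$ with dual $\beta\in H^1(M;\mathbb{Z})$, and represent $\beta$ by a harmonic map $u\colon M\to\mathbb{S}^1$ satisfying the Neumann condition $\partial u/\partial\nu=0$. For a.e.\ regular value $t$, the level set $\Sigma_t:=u^{-1}(t)$ is a properly embedded surface meeting $\partial M$ orthogonally with $[\Sigma_t]=\alpha\neq 0$, so $\Area(\Sigma_t)\geq\sys_2(M,\partial M)$. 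The injectivity of the connecting map then forces $[\partial\Sigma_t]=\partial\alpha\neq 0$ in $H_1(\partial M;\mathbb{Z})$, hence $\Length(\partial\Sigma_t)\geq\sys_1(\partial M)$.

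The heart of the argument is a Stern-type integral inequality with Neumann boundary. Combining Bochner's formula for $u$, integration against $|\nabla u|^{-1}$ on level sets, Kato's refinement, and Gauss--Bonnet for $\Sigma_t$ (whose boundary geodesic curvature equals $\mathrm{II}^{\partial M}(T,T)$), with the Bochner boundary integration by parts (which contributes $\mathrm{II}^{\partial M}(\nabla u,\nabla u)/|\nabla u|$ along $\partial \Sigma_t$), the two boundary pieces add to the full trace $H^{\partial M}$. Integrating in $t$ and applying the coarea formula on both $M$ and $\partial M$ (where $|\nabla^{\partial M}u|=|\nabla u|$ by Neumann) yields
\begin{align*}
2\pi\int_0^1\chi(\Sigma_t)\,dt \;\geq\; \tfrac{1}{2}\int_M R_M|\nabla u|\,dV \;+\; \int_{\partial M}H^{\partial M}|\nabla u|\,dA \;+\; (\text{nonneg.}).
\end{align*}
Replacing $R_M$ and $H^{\partial M}$ by their infima, reversing coarea, and invoking $\Area(\Sigma_t)\geq\sys_2(M,\partial M)$, $\Length(\partial\Sigma_t)\geq\sys_1(\partial M)$ together with $\chi(\Sigma_t)\leq 1$ (handled as in Theorem \ref{main1} by restricting attention to the essential boundary-carrying component of $\Sigma_t$, which is a compact orientable surface with nonempty boundary and hence has Euler characteristic at most one) delivers the desired inequality.

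For the rigidity statement, equality forces every nonnegative error term in Stern's identity to vanish: $|\nabla u|$ is constant, $\Hess u=0$ so that $\nabla u$ is parallel, each $\Sigma_t$ is totally geodesic of constant sectional curvature $\tfrac{1}{2}\inf R_M$, $\partial M$ is totally umbilic along $\partial\Sigma_t$ with constant mean curvature $\inf H^{\partial M}$, and $R_M$, $H^{\partial M}$ are both constant. A de\,Rham splitting of the universal cover then identifies $\widetilde M$ isometrically with a Riemannian product $\mathbb{B}_r^2\times\mathbb{R}$ of a geodesic disk in a round sphere with a line; matching the area of the spherical cap of radius $r$ with $\sys_2(M,\partial M)$ via $2\pi(1-\cos r)=\sys_2(M,\partial M)$ pins $r$ down as in the statement. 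The principal obstacle is the careful bookkeeping in the boundary Stern identity — showing that the Gauss--Bonnet contribution $\mathrm{II}^{\partial M}(T,T)$ and the Bochner boundary contribution $\mathrm{II}^{\partial M}(\nabla u,\nabla u)/|\nabla u|^2$ combine into the full mean curvature $H^{\partial M}$ — after which the rest proceeds in close parallel to Theorem \ref{main1}.
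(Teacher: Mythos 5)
Your proposal follows the paper's argument essentially step for step: a Neumann harmonic map $u\colon M\to\mathbb{S}^1$ dual to a nonzero relative class, the Bray--Stern integral inequality (which the paper does not re-derive but simply cites as \cite[Theorem 1.1]{bray2019}; your sketch of how the Gauss--Bonnet and Bochner boundary terms assemble into $H^{\partial M}$ is the content of that cited result), the lower bounds $\Area(\Sigma_\theta)\geq\sys_2(M,\partial M)$ and, via injectivity of the connecting homomorphism, $\Length(\partial\Sigma_\theta)\geq\sys_1(\partial M)$, and the same rigidity analysis with the cap-area identity $2\pi(1-\cos r)=\sys_2(M,\partial M)$. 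The one place your bookkeeping does not work as stated is the claim $\chi(\Sigma_t)\leq 1$ ``by restricting attention to the essential boundary-carrying component'': when $\Sigma_t$ is disconnected you cannot discard the remaining components from the left-hand side of the integral inequality, since each of them may contribute $+1$ (or $+2$, if closed) to $\chi(\Sigma_t)$. The paper's fix is to observe that \emph{every} connected component $S$ of a regular level set is essential in $H_2(M,\partial M;\mathbb{Z})$, because $\int_S\ast h=\int_S\lvert h\rvert>0$ for the harmonic one-form $h=u^*(\mathrm{d}\theta)$; injectivity of the connecting homomorphism then forces every component to have nonempty, homologically nontrivial boundary in $\partial M$, so $\chi(S)\leq 1$ for each. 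One then keeps the component count $N(\theta)$ on both sides ($\chi(\Sigma_\theta)\leq N(\theta)$, $\Area(\Sigma_\theta)\geq N(\theta)\sys_2(M,\partial M)$, $\Length(\partial\Sigma_\theta)\geq N(\theta)\sys_1(\partial M)$) and cancels $\int_{\mathbb{S}^1}N(\theta)$. With that correction your argument coincides with the paper's.
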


It is worth mentioning that, just as Theorem \ref{stern systolic} is similar to Theorem 1 in \cite{bray2010}, Theorem \ref{main3} resembles the rigidity results of \cite{ambrozio}. Inarguably, the geometric invariants in \cite[Theorem 8]{ambrozio} and in Theorem \ref{main3} have similar expressions. Nonetheless, whereas we minimise area over all compact surfaces with boundary which are nontrivial in (relative) homology in order to define $\sys_2(M, \partial M)$ , L. Ambrozio minimises area over the set of all discs whose boundaries are curves in $\partial M$ that are homotopically nontrivial in $\partial M$ in order to define $\mathcal{A}(M,g)$. This may produce different results, as Fig. \ref{bitorus} shows.  
\begin{figure}[h] 
\centering
\includegraphics[width = 8cm]{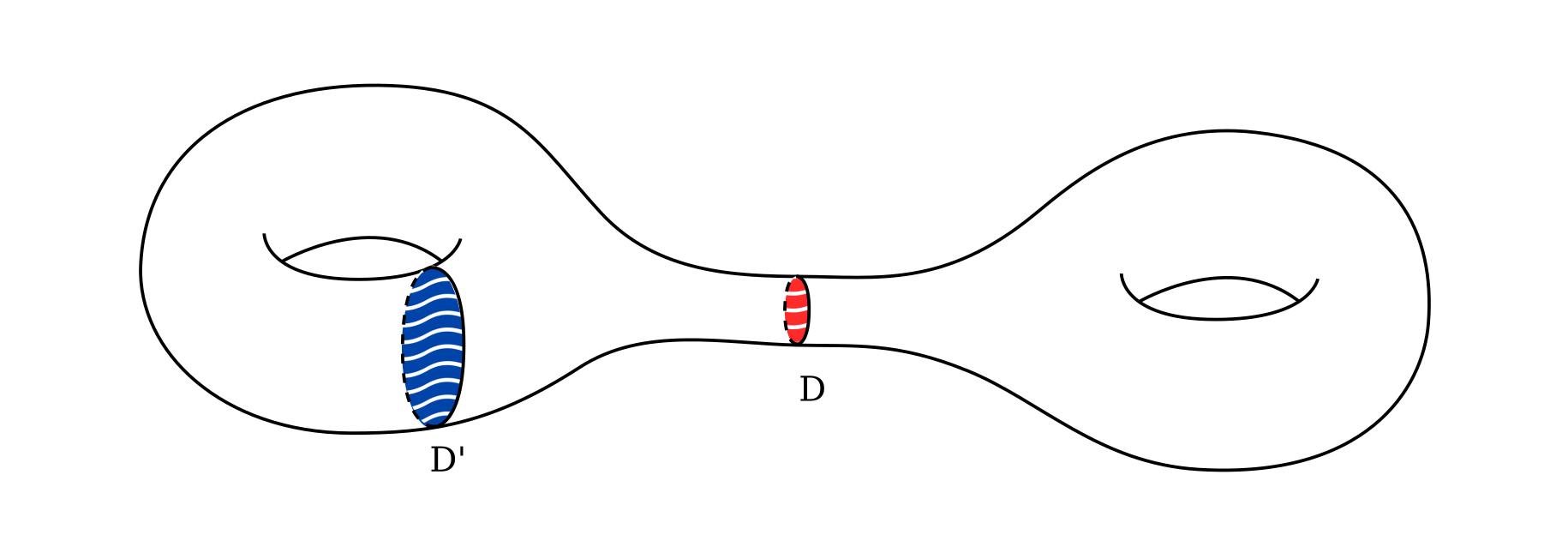}
\caption{The disc $D$ realises the quantity $\mathcal{A}(M,g)$, whereas $D'$ realises $\sys_2(M, \partial M)$. }
\label{bitorus}
\end{figure}

\begin{remark} If $M$ is compact, connected and orientable with connected nonempty boundary, then  the connecting homomorphism $H_2(M, \partial M; \mathbb{Z}) \to H_1(\partial M; \mathbb{Z})$ is injective if and only if  $H_2(M; \mathbb{Z}) = 0$. This can be seen by analysing the exact sequence in homology of the pair $(M, \partial M)$. 
\end{remark}

Finally, making a slight adaptation on the proof of Theorem \ref{main3}, we obtain the following corollary:

\begin{cor}
Let $(M^3,g)$ be a compact, connected and oriented Riemannian $3$-manifold with nonempty boundary. Assume that 
$H_2(M, \partial M; \mathbb{Z}) \neq 0$ and that the connecting homomorphism $H_2(M, \partial M; \mathbb{Z}) \to H_1(\partial M; \mathbb{Z})$ is injective. If $M$ has nonnegative scalar curvature and  mean-convex boundary then
\begin{align*}
\sys_1(\partial M) \inf_{\partial M} H^{\partial M} \leq 2 \pi.
\end{align*}
Moreover, if equality holds, then the universal cover of $M$ is isometric to the right circular cylinder $\mathbb{D}^2 \times \mathbb{R}$ up to scaling, where $\mathbb{D}^2$ is the flat closed unit disc.
\end{cor}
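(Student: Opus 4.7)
The plan is to follow the argument of Theorem \ref{main3} essentially verbatim, observing that the only place strict positivity $R_M>0$ enters is in bounding a nonnegative interior integral from below; once we weaken the hypothesis to $R_M\geq 0$, that term can simply be discarded. More precisely, the proof of Theorem \ref{main3} should proceed (in the spirit of Stern's harmonic function method) by using the injectivity of the connecting homomorphism $H_2(M,\partial M;\mathbb{Z})\to H_1(\partial M;\mathbb{Z})$ to produce a harmonic map $u\colon M\to \mathbb{S}^1$ whose cohomology class is Poincar\'e--Lefschetz dual to a nontrivial element of $H_2(M,\partial M;\mathbb{Z})$, and such that $u|_{\partial M}$ represents a nontrivial class in $H^1(\partial M;\mathbb{Z})$. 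Almost every regular level set $\Sigma_t=u^{-1}(t)$ is then a properly embedded surface nontrivial in $H_2(M,\partial M;\mathbb{Z})$ with boundary $\partial\Sigma_t$ nontrivial in $H_1(\partial M;\mathbb{Z})$. Combining a refined Kato--Bochner inequality with Gauss--Bonnet on $(\Sigma_t,\partial\Sigma_t)$ and the coarea formula produces a master inequality of the form
\begin{align*}
2\pi \;\geq\; \tfrac12\int_M R_M\,|du|\,dV \;+\; \int_{\partial M} H^{\partial M}\,|du|_T\,dA,
\end{align*}
where $|du|_T$ denotes the tangential part of $du$ along $\partial M$.

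With this master inequality in hand, the corollary is almost immediate. Since $R_M\geq 0$, the bulk integral on the right is nonnegative and can be dropped. The remaining boundary integral is bounded below by
\begin{align*}
\int_{\partial M} H^{\partial M}\,|du|_T\,dA \;\geq\; \inf_{\partial M} H^{\partial M}\int_{\partial M}|du|_T\,dA \;\geq\; \sys_1(\partial M)\inf_{\partial M} H^{\partial M},
\end{align*}
the last step by applying the coarea formula on $\partial M$ and using that almost every level curve of $u|_{\partial M}$ is a $1$-cycle of class a nonzero multiple of $\partial_\ast[\Sigma_t]$, hence of length at least $\sys_1(\partial M)$.

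For the rigidity statement, equality in the corollary forces equality in each step of the chain above. In particular $\int_M R_M\,|du|\,dV=0$, and since $|du|>0$ off a set of measure zero, $R_M\equiv 0$ on $M$. The equality case of the Kato--Bochner inequality then forces $du$ to be parallel, each regular level set $\Sigma_t$ to be a totally geodesic disc meeting $\partial M$ orthogonally, and $H^{\partial M}$ to be constant along $\partial \Sigma_t$ with value $\inf_{\partial M} H^{\partial M}$. Exactly as in the rigidity analysis for Theorem \ref{main3}, this local product structure globalises to an isometric splitting of the universal cover $\widetilde M \cong \Sigma\times\mathbb{R}$, where the fibre $\Sigma$ is flat (because $R_M\equiv 0$ and $\Sigma$ is totally geodesic), with boundary a simple closed geodesic of length $\sys_1(\partial M)$ and constant inward geodesic curvature $\inf_{\partial M}H^{\partial M}$. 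Applying Gauss--Bonnet to $\Sigma$ in the equality case $\sys_1(\partial M)\inf H^{\partial M}=2\pi$ identifies $\Sigma$, up to scaling, with the flat closed unit disc $\mathbb{D}^2$.

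The main obstacle, as with Theorem \ref{main3}, is the rigidity step: propagating the infinitesimal product structure (which is only available away from the critical set of $u$) to a global splitting of the universal cover, and then identifying the flat fibre as a round disc rather than merely a convex flat planar domain. Both difficulties are handled by the arguments already developed for Theorem \ref{main3}, with the spherical fibre $\mathbb{B}^2_r$ of curvature $+1$ simply replaced by the flat fibre $\mathbb{D}^2$ of curvature $0$ corresponding to $R_M\equiv 0$.
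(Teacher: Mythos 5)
Your proposal is correct and follows exactly the adaptation the paper intends: the corollary is obtained from the proof of Theorem \ref{main3} by discarding the (now merely nonnegative) scalar curvature term, keeping the boundary term bounded below by $\sys_1(\partial M)\inf_{\partial M}H^{\partial M}$ via the injectivity of the connecting homomorphism, and running the same rigidity analysis with the spherical cap replaced by a flat disc since $R_M\equiv 0$ in the equality case. The only cosmetic discrepancy is that your ``master inequality'' carries a bare $2\pi$ on the left where the paper's chain carries $2\pi\int_{\mathbb{S}^1}N(\theta)$ and correspondingly a factor $\int_{\mathbb{S}^1}N(\theta)$ in the lower bound for the boundary term, but these factors cancel and the conclusion is unaffected.
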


\section*{Acknowledgements}

The author would like to thank Paolo Piccione for the support during the period when this article was written and revised, Lucas Ambrozio for valuable comments on this work, Joshua Howie for providing the argument in Remark \ref{joshua} and Izabella Freitas for Fig.\ref{bitorus}. He would also like to thank the anonymous referee for pointing out the existence of Carlotto and Li's classification theorem stated in the introduction and for his/her interest on making the manuscript clearer and more precise.

\section{Proof of the Theorems}

For Theorems \ref{main1} and \ref{main3}, we shall make use of the following result:

\begin{thm}\smash{\cite[Theorem 1.1]{bray2019}} \label{stern theorem}
 Let $(M^3,g)$ be a compact, connected and oriented Riemannian $3$-manifold with nonempty boundary. For a harmonic map $u \colon M \to \mathbb{S}^1 = \mathbb{R} / \mathbb{Z}$ satisfying homogeneous Neumann condition, we have the inequality
\begin{align*}
2 \pi \int_{\mathbb{S}^1} \chi(\Sigma_{\theta}) \geq 
\int_{\mathbb{S}^1} \left( \int_{\Sigma_{\theta}} \frac{1}{2}(\norm{\mathrm{d}u}^{-2} \norm{\Hess(u)}^2 + R_M) 
+ \int_{\partial \Sigma_{\theta}} H^{\partial M} \right),
\end{align*}
where $R_M$ is the scalar curvature of $M$, $H^{\partial M}$ is the mean curvature of $\partial M$, $\Sigma_{\theta} = u^{-1}(\theta)$ is a regular level set of $u$ and $\chi(\cdot)$ denotes the Euler characteristic.
\end{thm}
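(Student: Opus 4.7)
My approach would follow Stern's level-set Bochner technique, adapted to the Neumann setting. Fix a smooth harmonic $u\colon M\to\mathbb{S}^1$ satisfying $du(\nu_M)=0$ on $\partial M$, where $\nu_M$ is the outward unit normal. The Neumann condition forces $\nabla u$ to be tangent to $\partial M$, so every regular level set $\Sigma_\theta=u^{-1}(\theta)$ is a smooth compact surface whose boundary $\partial\Sigma_\theta\subset\partial M$ meets $\partial M$ orthogonally, with unit normal $\nu_\Sigma=\nabla u/|du|$ along $\Sigma_\theta$.

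The heart of the argument is the pointwise identity, valid on the regular set $\{|du|>0\}$,
\[
K_{\Sigma_\theta}=\frac{1}{2}\!\left(R_M+\frac{|\Hess(u)|^2}{|du|^2}\right)-\frac{\Delta|du|}{|du|},
\]
obtained by combining the Gauss equation for $\Sigma_\theta\subset M$ with the Bochner identity $\tfrac12\Delta|du|^2=|\Hess(u)|^2+\Ric(\nabla u,\nabla u)$. The derivation decomposes $\Hess(u)$ in a frame adapted to $\nu_\Sigma$: the tangential part equals $|du|\,A_{\Sigma_\theta}$, the mixed part equals the tangential $\Sigma_\theta$-gradient of $|du|$, and harmonicity forces $\Hess(u)(\nu_\Sigma,\nu_\Sigma)=-|du|\,H_{\Sigma_\theta}$. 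After this bookkeeping, all second-fundamental-form contributions package cleanly into the $|du|^{-2}|\Hess(u)|^2$ term.

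I would then multiply this identity by $|du|$ and integrate over $M$ via the coarea formula, so that $\int_{\mathbb{S}^1}\int_{\Sigma_\theta}K_{\Sigma_\theta}\,dA\,d\theta=\int_M K_{\Sigma_\theta}|du|\,dV$. Stokes converts $\int_M\Delta|du|\,dV$ into the boundary integral $\int_{\partial M}\nu_M(|du|)\,dA$; differentiating the Neumann condition $\langle\nabla u,\nu_M\rangle=0$ along $\nabla u$ and invoking symmetry of $\Hess(u)$ yields the key boundary identity $\nu_M(|du|)=-|du|\,A^{\partial M}(\nu_\Sigma,\nu_\Sigma)$. In parallel, Gauss--Bonnet on each $(\Sigma_\theta,\partial\Sigma_\theta)$, combined with the orthogonal-meeting property, gives $\kappa_g=H^{\partial M}-A^{\partial M}(\nu_\Sigma,\nu_\Sigma)$ along $\partial\Sigma_\theta$; a second coarea argument on $\partial M$ (legitimate because $du|_{\partial M}$ has the same norm as $du$) integrates this curvature over $\mathbb{S}^1$. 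Reassembling the pieces, the $A^{\partial M}(\nu_\Sigma,\nu_\Sigma)$ contributions coming from the interior and from the boundary cancel exactly, leaving an identity that realises the claimed inequality as an equality whenever $u$ is everywhere regular.

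The main technical obstacle is the critical set of $u$, where $|du|=0$ makes the pointwise identity singular and $\Sigma_\theta$ fails to be smooth. I would handle this in the standard way by regularising with $w_\varepsilon=\sqrt{|du|^2+\varepsilon}$: the analogous computation then produces a genuine inequality rather than an equality, with error terms of the favourable sign, and passing to $\varepsilon\to 0$ (while restricting via Sard's theorem to the full-measure set of regular values of $u$, on which $\chi(\Sigma_\theta)$ is well defined) delivers the $\geq$ in the statement.
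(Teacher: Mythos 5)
The paper does not actually prove this statement---it is imported verbatim from Bray--Stern \cite[Theorem 1.1]{bray2019}---but your outline is a faithful reconstruction of their argument: the pointwise identity $K_{\Sigma_\theta}=\tfrac12\bigl(R_M+|du|^{-2}\norm{\Hess(u)}^2\bigr)-\Delta|du|/|du|$ (which checks out against the Gauss equation, Bochner, and the adapted-frame decomposition of $\Hess(u)$), the boundary identity $\nu_M(|du|)=-|du|\,A^{\partial M}(\nu_\Sigma,\nu_\Sigma)$ from differentiating the Neumann condition, its cancellation against the $A^{\partial M}(\nu_\Sigma,\nu_\Sigma)$ term in $\kappa_g=H^{\partial M}-A^{\partial M}(\nu_\Sigma,\nu_\Sigma)$ via Gauss--Bonnet and the coarea formula on $\partial M$, and the $\varepsilon$-regularisation across the critical set are exactly the ingredients used there. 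I see no gap; this matches the source's approach essentially step for step.
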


Notice that every regular level set $\Sigma_{\theta}$ of $u$ in Theorem \ref{stern theorem} meets the boundary of $M$ orthogonally. This will be important in the proof of Theorem \ref{main1} below.

\begin{proof}[Proof of Theorem \ref{main1}]
Recall that Poincar\'{e}--Lefschetz duality gives an isomorphism
\begin{align*}
H_2(M, \partial M; \mathbb{Z}) \cong H^1(M;\mathbb{Z}) \cong [M : \mathbb{S}^1].
\end{align*}
Since we are assuming that $H_2(M, \partial M;\mathbb{Z}) \neq 0$, there is a nontrivial homotopy class $[v] \in [M : \mathbb{S}^1]$. Applying standard Hodge theory to the cohomology class $v^*(\mathrm{d} \theta) \in H^1_{\operatorname{dR}}(M)$ provides an energy-minimising representative $u \colon M \to \mathbb{S}^1$. It can be shown that this function is harmonic and satisfies homogeneous Neumann condition along 
$\partial M$.  

In order to prove the theorem, fix such a map $u \colon M \to \mathbb{S}^1$. By Theorem \ref{stern theorem} and by the fact that 
$H^{\partial M} \geq 0$, we have the following inequalities:
\begin{align}
2 \pi \int_{\mathbb{S}^1} \chi(\Sigma_{\theta}) &\geq 
\int_{\mathbb{S}^1} \left( \int_{\Sigma_{\theta}} \frac{1}{2}(\norm{\mathrm{d}u}^{-2} \norm{\Hess(u)}^2 + R_M) 
+ \int_{\partial \Sigma_{\theta}} H^{\partial M} \right) \nonumber \\
&\geq \frac{1}{2} \inf_M R_M \int_{\mathbb{S}^1} \Area(\Sigma_{\theta}). \label{ineq chi area}
\end{align}

Now notice that, whenever $\Sigma_{\theta}$ is a regular level set of $u$, it holds that every connected component of $\Sigma_{\theta}$ represents a nontrivial class in $H_2(M, \partial M;\mathbb{Z})$. Indeed, if $S$ is a connected component of $\Sigma_{\theta}$ and $h = u^*(\mathrm{d} \theta)$ is the gradient $1$-form induced by $u$, then
\begin{align*}
\int_S \ast h = \int_S \left\vert h \right\vert > 0,
\end{align*}
where $\ast h$ is the Hodge dual of $h$.

Also observe that, if $N(\theta)$ denotes the number of connected components of $\Sigma_{\theta}$, then $\chi(\Sigma_{\theta}) \leq N(\theta)$. This holds simply because $\chi(S) \leq 1$ for any compact and connected surface with boundary.

Combining these facts with inequality (\ref{ineq chi area}), we obtain

\begin{align*}
2 \pi \int_{\mathbb{S}^1} N(\theta) &\geq 2 \pi \int_{\mathbb{S}^1} \chi(\Sigma_{\theta}) 
\geq \frac{1}{2} \inf_M R_M \int_{\mathbb{S}^1} \Area(\Sigma_{\theta}) \\
& \geq \frac{1}{2} \sys_2(M, \partial M) \inf_M R_M \int_{\mathbb{S}^1} N(\theta). 
\end{align*}

Cancelling factors, we get
\begin{align*}
\sys_2(M, \partial M) \inf_M R_M \leq 4 \pi,
\end{align*}
as we wanted.

Suppose now that equality holds. Then, analysing all the previous steps, we have

\begin{itemize}
\item[(i)] $\Hess(u) \equiv 0$ on $M$;
\item[(ii)] $R_M \equiv \inf_M R_M > 0$  is constant along $M$;
\item[(iii)] $H^{\partial M} \equiv 0$ along $\partial M$;
\item[(iv)] $\chi(\Sigma_{\theta}) = N(\theta)$ for every $\theta \in \mathbb{S}^1$.
\end{itemize}

Firstly, notice that condition (i) implies that $\mathrm{d}u$ has constant norm (different from $0$). So, every level set $\Sigma_{\theta}$ is regular and totally geodesic. Indeed, let $A$ denote the second fundamental form of a level set of $u$, and let $X,Y$ be tangent vectors of that level set. Then
\begin{align*}
A(X,Y) &= \left\langle \overline{\nabla}_X Y, \frac{\nabla u}{\norm{\nabla u}} \right\rangle 
= \frac{1}{\norm{\nabla u}} \left\langle \overline{\nabla}_X Y, \nabla u \right\rangle
= -\frac{1}{\norm{\nabla u}} \left\langle \overline{\nabla}_X \nabla u, Y \right\rangle \\
&= -\frac{1}{\norm{\nabla u}} \Hess(u)(X,Y) 
=0.
\end{align*}

Secondly, the Bochner formula for the (harmonic) gradient $1$-form $h = u^*(\mathrm{d} \theta)$ reads
\begin{align*}
\Delta \frac{1}{2} \norm{h}^2 = \norm{Dh}^2 + \Ric(h,h).
\end{align*}
Since $\norm{h} = \norm{\mathrm{d}u}$ is constant and $Dh = \Hess(u) \equiv 0$, we get $\Ric(h,h) = \Ric(\nabla u, \nabla u) = 0$. Now, the Gauss 
equation for a level set $\Sigma_{\theta}$, 
\begin{align*}
\Ric(N,N) = \frac{1}{2}\left( R_M - R_{\theta} + H_{\theta}^2 - \norm{A_{\theta}}^2  \right),
\end{align*}
gives that the sectional curvature of $\Sigma_{\theta}$ is constant and equal to $\frac{1}{2} R_M$ (which is itself constant by (ii)). Here, 
$N = \frac{\nabla u}{\norm{\nabla u}}$ denotes the unit normal, $R_\theta$ the scalar curvature, $H_\theta$ the mean curvature and $A_\theta$ the second fundamental form of $\Sigma_\theta$. This way, each component of a level set of $u$ is isometric to disc (by condition (iv)) of a round sphere. 

We now show that the geodesic curvature of the boundary of such a disc $D$ is zero. For this, let $T$ be a unit vector field which is tangent to the boundary of $D$. Since $u$ satisfies the homogeneous Neumann boundary condition, $\{T(p), N(p) \}$ is an orthonormal basis of $T_p (\partial M)$ for every $p \in \partial D$, where $N = \frac{\nabla u}{\norm{\nabla u}}$. Now let $X$ be the outward unit  normal to $\partial M$. Then the geodesic curvature of $\partial D$ is given by $k_g = \langle \nabla_T X, T \rangle$, because $X$ is also the outward unit conormal to $\partial D$ in $D$. So, we have:
\begin{align*}
0 = H^{\partial M} =k_g + \langle \nabla_N X, N \rangle.
\end{align*}
But as $\langle X, \nabla u\rangle = 0$, we can write the second term above as
\begin{align*}
\langle \nabla_N X, N \rangle &= \frac{1}{\norm{\nabla u}^2} \langle \nabla_{\nabla u} X, \nabla u \rangle = -\frac{1}{\norm{\nabla u}^2} \langle \nabla_{\nabla u} \nabla u, X \rangle \\
&= -\frac{1}{\norm{\nabla u}^2} \Hess(u)(\nabla u, X)
\end{align*}
Since we are assuming that the Hessian of $u$ vanishes, this shows that $k_g = 0$, as we wanted.

Finally, fixing a connected component $S$ of a level set of $u$, the gradient flow of $u$, $\Phi : S \times \mathbb{R} \to M$,
\begin{align*}
\frac{\partial \Phi}{\partial t} = \frac{\nabla u}{\norm{\nabla u}} \circ \Phi,
\end{align*}
defines a local isometry. Notice that since the gradient of $u$ is tangent to the boundary (by the Neumann condition), the flow is well defined. So, $\Phi$ is a covering map. This completes the proof of the theorem.
\end{proof}

We now prove Theorem \ref{main3}.

\begin{proof}[Proof of Theorem \ref{main3}]
Fix a nontrivial harmonic map $u \colon M \to \mathbb{S}^1$ satisfying homogeneous Neumann boundary condition. From the proof of the previous theorem, we know that any component $S$ of a regular level set $\Sigma_\theta$ of $u$ represents a nontrivial class in $H_2(M, \partial M; \mathbb{Z})$. Since the connecting homomorphism is injective, it follows that some component of $\partial S$ represents a nontrivial class in $H_1(\partial M; \mathbb{Z})$. So, invoking Theorem \ref{stern theorem} as before, we have
\begin{align*}
2 \pi \int_{\mathbb{S}^1} N(\theta) &\geq 2 \pi \int_{\mathbb{S}^1} \chi(\Sigma_{\theta}) \geq 
\int_{\mathbb{S}^1} \left( \int_{\Sigma_{\theta}} \frac{1}{2}(\norm{\mathrm{d}u}^{-2} \norm{\Hess(u)}^2 + R_M) 
+ \int_{\partial \Sigma_{\theta}} H^{\partial M} \right) \\
&\geq \frac{1}{2} \inf_M R_M \int_{\mathbb{S}^1} \Area(\Sigma_\theta) + \inf_{\partial M} H^{\partial M} \int_{\mathbb{S}^1} \Length(\partial \Sigma_\theta) \\
&\geq  \frac{1}{2} \sys_2(M, \partial M; \mathbb{Z}) \inf_M R_M \int_{\mathbb{S}^1} N(\theta) + \sys_1(\partial M) \inf_{\partial M} H^{\partial M} \int_{\mathbb{S}^1} N(\theta)
\end{align*}
Cancelling factors, we get
\begin{align*}
\frac{1}{2} \sys_2(M, \partial M) \inf_M R_M + \sys_1(\partial M) \inf_{\partial M} H^{\partial M} \leq 2 \pi,
\end{align*}
as we wanted. The analysis of the equality case goes as in the proof of Theorem \ref{main1}. Just notice that, in the present situation, the boundary of $M$ need not be minimal. Therefore, each component of a level set of $u$ is isometric to a geodesic ball of area $\sys_2(M, \partial M)$ of the round sphere, as we wanted.
\end{proof}

We conclude by proving Theorem \ref{main2}.

\begin{proof}[Proof of Theorem \ref{main2}]
Let $\{ \Sigma_n \}_{n \geq 1}$ be a minimising sequence for $\sys_2(M, \partial M)$, that is, each $\Sigma_n$ is a compact, oriented and embedded surface determining a nonzero class in $H_2(M, \partial M;\mathbb{Z})$, and $\Area(\Sigma_n) \rightarrow \sys_2(M, \partial M)$ as $n \to \infty$. Each $\Sigma_n$ can be seen as an integer relative $2$-cycle. We then minimise the mass among all relative integral $2$-cycles in the homology class of $\Sigma_n$ (see Corollary 9.9 in \cite{federer1960}, for example). This gives rise to an integral relative cycle $\alpha_n$ whose support consists of a smooth, orientable, stable and properly embedded minimal surface $\Sigma_n^\ast$ (see the proof of Proposition 5.3 in \cite{guang}, the proof of Proposition 10 in \cite{mazet} and references therein). We are going to show that, under our assumption on the weak irreducibility of $M$, there is at least one component of  $\Sigma_n^*$ which has a nonempty boundary.

Assume $\Sigma_n^*$ has a closed component $S$. Since $S$ and $M$ are oriented, $S$ must be two-sided. Using $1$ as a test function for the second variation of area of this surface, and recalling that the scalar curvature of $M$ is positive, we conclude that this component must be a sphere (see the proof of Theorem 5.1 in \cite{schoen}). By hypothesis, this sphere is separating and thus determines the zero class in $H_2(M, \partial M;\mathbb{Z})$. So, if all the components of $\Sigma_n^*$ were closed, $\Sigma_n^*$ would represent the trivial class in $H_2(M, \partial M; \mathbb{Z})$, contradicting the fact that $\alpha_n$ is homologous to $\Sigma_n$ and $[\Sigma_n] \neq 0$.

Thus, from what we have just shown and since $\Sigma_n^\ast$ is not null-homologous (in relative homology), it must have at least one component with boundary which is also not null-homologous. Call it $D_n$.  Then $\{D_n\}_{n \geq 1}$ is a sequence of free boundary and properly embedded stable minimal surfaces of uniformly bounded area. By Theorem 1.2 and Theorem 6.1 in \cite{guang2018}, there is a subsequence $\{D_{n_k}\}_{k \geq 1}$ that converges smoothly and locally uniformly to a free boundary and properly embedded minimal surface $D$.  It is easy to see from the convergence that $[D] \neq 0$. Moreover, since $\Area(D_n) \leq \Area(\Sigma_n^\ast) \leq \Area(\Sigma_n)$, it follows that $\Area(D) = \sys_2(M, \partial M)$. In particular, $D$ minimises area in its homology class, which implies that $D$ is stable. Finally, we use Theorem 1.2 (iii) in \cite{fraser2014} to conclude that $D$ is a disc.
\end{proof}

\begin{remark} \label{joshua}
Having in mind Carlotto and Li's topological classification mentioned in the introduction for those $3$-manifolds admitting Riemannian metrics of positive scalar curvature and mean-convex boundary, the additional condition of being weakly irreducible in Theorem \ref{main2} holds if and only if there are no $\mathbb{S}^2 \times \mathbb{S}^1$ summands in (\ref{topology}) (that is, $C = 0$). Indeed, suppose that $C = 0$ and let $\Sigma$ be a smoothly embedded $2$-sphere in the interior of $M$. Let $\{S_j\}_{j=1}^k$ be a collection of embedded $2$-spheres which decompose $M$ into prime summands. Look at the intersection of $\Sigma$ with $\{S_j\}$ and let $\Delta$ be an innermost disc on some $S_k$. We can surger $\Sigma$ along $\Delta$, which will decompose $\Sigma$ into two embedded $2$-spheres. Repeating this process until there are no intersections with $\{S_j\}_{j=1}^k$, $\Sigma$ is decomposed into a collection of embedded $2$-spheres which we call $\Sigma'$. Then $\Sigma$ will be nonseparating in $M$ if and only if some component of $\Sigma'$ is nonseparating in $M$.
	
Each component of $\Sigma'$ is contained within a single prime summand of $M$. We then cut $M$ along $\{S_j\}_{j=1}^k$, and glue $3$-balls onto each $2$-sphere boundary component of the resulting $3$-manifolds. It is well-known that handlebodies and spherical $3$-manifolds are irreducible, which means that every embedded $2$-sphere bounds a $3$-ball. Thus each component of $\Sigma'$ is separating in its respective prime summand and hence in $M$. Removing the $D$ $3$-balls from $M$ which are disjoint from $\Sigma$ and $\Sigma'$ does not affect whether $\Sigma$ is separating. Therefore $\Sigma$ is separating in $M$.
	
Conversely, if $C\neq 0$, then we can find a nonseparating 2-sphere $\Sigma_0$ in some $\mathbb{S}^2\times \mathbb{S}^1$ component which is disjoint from each $B_i$ and each $S_j$. Furthermore, there exists a closed simple curve in $\mathbb{S}^2 \times \mathbb{S}^1$ which intersects $\Sigma_0$ transversely exactly once. Isotoping this curve to be disjoint from the balls $B_i$ and the spheres $S_j$ yields another curve, now in $M$, with the same property. Therefore $\Sigma_0$ is nonseparating in $M$.
\end{remark}

\end{document}